\documentclass{amsart}
\usepackage[cp1251]{inputenc}
\usepackage{amsmath,amssymb,amsthm}
\usepackage[mathscr]{eucal}
\usepackage[dvips]{graphicx}

\newtheorem*{theorem}{Theorem}
\newtheorem*{proposition}{Proposition}
\newtheorem*{lemma}{Lemma}
\newtheorem*{definition}{Definition}
\newtheorem{example}{Example}

\author{Evgeny Fominykh}

\address{Chelyabinsk State University,
Chelyabinsk, 454001, Russia}

\address{Institute of Mathematics and Mechanics,
Ural Branch of the Russian Academy of Sciences, Ekaterinburg, 620219, Russia}
 \email{fominykh@csu.ru}

\title{DEHN SURGERIES ON THE FIGURE EIGHT KNOT: AN UPPER BOUND FOR THE COMPLEXITY}

\subjclass[2000]{Primary: 57M99, Secondary: 57M25.}

\keywords{Dehn surgery, figure eight knot, upper bound of the
complexity.}

\thanks{The research was supported by the Russian Foundation for Basic Research (grant 10-01-91056)
and by the Joint Program of the Institute for Mathematics and
Mechanics UrO RAN and of the Institute for Mathematics SO RAN}

\begin{document}

\begin{abstract}
 We establish an upper bound $\omega(p/q)$ on the complexity of
manifolds obtained by $p/q$-surgeries on the figure eight knot. It
turns out that if $\omega(p/q)\leqslant 12$, the bound is sharp.
\end{abstract}

\maketitle

\markright{Dehn surgeries on the figure eight knot: an upper bound
for the complexity}

%--------------------------------------------------------------------------------------------------
\section*{Introduction}

 The notion of the complexity $c(M)$ of a compact $3$-manifold $M$
was introduced in \cite{Matveev-1990}. The complexity is defined as
the minimal possible number of true vertices of an almost simple
spine of $M$. If $M$ is closed and irreducible and $c(M)>0$, then
$c(M)$ is the minimal number of tetrahedra needed to obtain $M$ by
gluing together their faces. The problem of calculating the
complexity $c(M)$ is very difficult. The exact values of the
complexity are presently known only for certain infinite series of
irreducible boundary irreducible $3$-manifolds \cite{FMP,
Anisov-2005, Jaco-Rubinstein-Tillmann-2009}. In addition, this
problem is solved for all closed orientable irreducible manifolds up
to complexity $12$ (see \cite{Matveev-2005}). Note that the table
given in \cite{Matveev-2005} contains $36833$ manifolds and is only
available in electronic form \cite{Atlas}.

 The task of finding an upper bound for the complexity of a manifold
$M$ does not present any particular difficulties. To do that it
suffices to construct an almost simple spine $P$ of $M$. The number
of true vertices of $P$ will serve as an upper bound for the
complexity. It is known \cite[2.1.2]{Matveev-2003} that an almost
simple spine can be easily constructed from practically any
representation of a manifold. The rather large number of manifolds
in \cite{Atlas} gives rise to a new task of finding potentially
sharp upper bounds for the complexity, i.e. upper bounds that would
yield the exact value of the complexity for all manifolds from the
table \cite{Atlas}. An important result in this direction was
obtained by Martelli and Petronio \cite{Martelli-Petronio-2004}.
They found a potentially sharp upper bound for the complexity of all
closed orientable Seifert manifolds. Similar results for infinite
families of graph manifolds can be found in
\cite{Fominykh-Ovchinnikov-2005, Fominykh-2008}.

 An upper bound $h(r/s, t/u, v/w)$ for the complexity of hyperbolic
manifolds obtained by surgeries on the link $6^3_1$ (in Rolfsen's
notation \cite{Rolfsen-1976}) with rational parameters $(r/s, t/u,
v/w)$ is given by Martelli and Petronio in
\cite{Martelli-Petronio-2004}. It turns out that the bound is not
sharp for a large number of manifolds, as the following two examples
show. First, the value of $h$ is equal to $10$ only for $13$ of $24$
manifolds of complexity $10$ obtained by surgeries on $6^3_1$ (see
\cite{Martelli-Petronio-2004}). Second, on analyzing the table
\cite{Atlas} we noticed that the bound is not sharp for $44$ of $46$
manifolds of the type $6^3_1(1, 2, v/w)$ with complexity less or
equal to $12$. Denote by $4_1(p/q)$ the closed orientable
$3$-manifold obtained from the figure eight knot $4_1$ by
$p/q$-surgery. Since the manifolds $4_1(p/q)$ and $6^3_1(1, 2,
p/q+1)$ are homeomorphic, a potentially sharp upper bound for the
complexity of such manifolds become important.

 The following theorem is the main result of the paper. To give an
exact formulation, we need to introduce a certain
$\mathbb{N}$-valued function $\omega(p/q)$ on the set of
non-negative rational numbers. Let $p\geqslant 0$, $q\geqslant 1$ be
relatively prime integers, let $[p/q]$ be the integer part of $p/q$,
and let $rem(p,q)$ be the remainder of the division of $p$ by $q$.
As in \cite{Matveev-2003}, we denote by $S(p,q)$ the sum of all
partial quotients in the expansion of $p/q$ as a regular continued
fraction. Now we define:
 $$\omega(p/q) = a(p/q) + \max\{[p/q]-3, 0\} + S(rem(p,q),q),$$
where
 $$a(p/q) = \left\{%
\begin{array}{ll}
    6, & \hbox{if } p/q=4, \\
    7, & \hbox{if } p/q\in \mathbb{Z} \hbox{ and } p/q\neq 4,\\
    8, & \hbox{if } p/q\not\in \mathbb{Z}.\\
\end{array}%
\right.$$

\begin{theorem}
 For any two relatively prime integers $p\geqslant 0$ and
$q\geqslant 1$ we have the inequality $c(4_1(p/q))\leqslant
\omega(p/q)$. Moreover, if $\omega(p/q)\leqslant 12$, then
$c(4_1(p/q)) = \omega(p/q)$.
\end{theorem}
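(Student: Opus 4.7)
The plan is to prove the upper bound by constructing, for each admissible pair $(p,q)$, an explicit almost simple spine of $4_1(p/q)$ with exactly $\omega(p/q)$ true vertices; the sharpness statement then reduces to a finite verification against the census \cite{Atlas}.

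First I would fix a small spine $P_0$ of the exterior of the figure eight knot, derived from its well-known $2$-tetrahedron ideal triangulation. On the boundary torus of $P_0$ I would attach a combinatorial pattern realising the $p/q$ Dehn filling. The pattern is built from the regular continued fraction expansion of $p/q$: the partial quotients after the integer part contribute $S(rem(p,q),q)$ true vertices via a layered solid torus, while the integer part $[p/q]$ corresponds to successive integer slope reductions, the first three of which are absorbed into the boundary pattern on $P_0$ and each subsequent one contributing a single new true vertex---this is the summand $\max\{[p/q]-3,0\}$. The base value $a(p/q)$ then records how many true vertices survive after the slope has been brought down to $0$, $1$, $2$, or $3$ (respectively to a non-integer slope); the isolated exceptional value $a(4)=6$ reflects one additional collapse available specifically at $p/q=4$, and the two-valued behaviour on integers versus non-integers reflects an analogous collapse at generic integer slopes.

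The main technical obstacle in the upper bound argument is bookkeeping: one must verify both that the assembled $2$-complex is genuinely an almost simple spine of $4_1(p/q)$ (not of some proper quotient or of a connected sum), and that no true vertex is double-counted at the junction between $P_0$ and the layered torus pattern, so that the total comes out exactly to $\omega(p/q)$. I would first establish the generic case $p/q\notin\mathbb{Z}$ with $[p/q]\geqslant 3$, and then inspect the handful of boundary cases---integer slopes, and the distinguished slope $p/q=4$---by hand, drawing the resulting spine and counting its true vertices directly.

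For the sharpness statement, the inequality $\omega(p/q)\leqslant 12$ is satisfied by only finitely many slopes $p/q$. For each of them I would locate $4_1(p/q)$ in \cite{Atlas} by matching the standard invariants---first homology together with the hyperbolic volume in the hyperbolic case, and Seifert invariants in the exceptional slopes of the figure eight knot---and then read off $c(4_1(p/q))$ from the tables. Since \cite{Atlas} lists all closed orientable irreducible manifolds of complexity at most $12$, the equality $c(4_1(p/q))=\omega(p/q)$ follows once each identification has been made.
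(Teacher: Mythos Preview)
Your outline is essentially the paper's strategy: glue a small spine of $E(4_1)$ to a layered filling piece controlled by the continued fraction of $p/q$, count true vertices via the flip/Farey distance, and verify sharpness against the census. Two points, however, need correction.

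First, the step ``the first three integer reductions are absorbed into the boundary pattern on $P_0$'' is where the real work lies, and your proposal treats it as routine. The paper achieves this by proving a separate Proposition: for each $i\in\{0,1,2,3\}$ one can place a theta-curve $\theta^{(i)}$ on $\partial E(4_1)$, with slopes $i$, $i+1$, $\infty$, so that the resulting relative spine of $(E(4_1),\theta^{(i)})$ has exactly $10$ interior true vertices. This requires drawing each $\theta^{(i)}$ explicitly on the hexagonal boundary pattern coming from the two-tetrahedron spine and checking the vertex count by hand; it is not a formal consequence of anything and is the technical core of the argument.

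Second, your interpretation of $a(4)=6$ as ``one additional collapse available specifically at $p/q=4$'' is wrong. The slopes $p/q\in\{0,1,2,3,4\}$ are exactly the non-hyperbolic fillings, and the paper does \emph{not} construct spines for them via the general scheme; indeed the Farey-distance calculation would return a negative number there. Instead these five cases are disposed of by quoting $c(4_1(p/q))=7$ from the census, and the piecewise definition of $a$ is simply rigged so that $\omega$ equals $7$ on all five. So in your write-up the ``boundary cases'' should be handled by appeal to known complexity values rather than by looking for a special simplification at slope~$4$.
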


 Note that the restrictions $p\geqslant 0$ and $q\geqslant 1$ in the
above theorem are inessential, since the knot $4_1$ is equivalent to
its mirror image, which implies $4_1(-p/q)$ is homeomorphic to
$4_1(p/q)$.

%--------------------------------------------------------------------------------------------------

\section{Preliminaries}

 In this section we recall some known definitions and facts that
will be used in the paper.

\subsection{Theta-curves on a torus}

 By a theta-curve $\theta\subset T$ on a torus $T$ we mean a graph
that is homeomorphic to a circle with a diameter and such that
$T\setminus \theta$ is an open disc. It is well known
\cite{Martelli-Petronio-2004, Anisov-1994} that any two theta-curves
on $T$ can be transformed into each other by isotopies and by a
sequence of flips (see Fig.~\ref{flip-transformation}). Let us endow
the set $\Theta(T)$ of theta-curves on $T$ with the distance
function $d$ defining for given $\theta, \theta'\in \Theta(T)$ the
distance $d(\theta, \theta')$ between them as the minimal number of
flips required to transform $\theta$ into $\theta'$.

\begin{figure}
\centering
\includegraphics[scale=0.6]{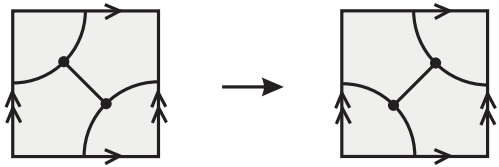}
\caption{A flip-transformation} \label{flip-transformation}
\end{figure}

 For calculating the distance between two theta-curves on a torus
we use the classical ideal triangulation $\mathbb{F}$ (Farey
tesselation) of the hyperbolic plane $\mathbb{H}^2$. If we view the
hyperbolic plane $\mathbb{H}^2$ as the upper half plane of
$\mathbb{C}$ bounded by the circle $\partial \mathbb{H}^2 =
\mathbb{R}\cup \{\infty\}$, then the triangulation $\mathbb{F}$ has
vertices at the points of $\mathbb{Q}\cup \{1/0\}\subset \partial
\mathbb{H}^2$, where $1/0=\infty$, and its edges are all the
geodesics in $\mathbb{H}^2$ with endpoints the pairs $a/b$, $c/d$
such that $ad-bc=\pm 1$. For convenience, the images of the
hyperbolic plane $\mathbb{H}^2$ and of the triangulation
$\mathbb{F}$ under the mapping $z\to (z-i)/(z+i)$ are shown in
Fig.~\ref{triangulation}.

\begin{figure}
\centering
\includegraphics[scale=0.5]{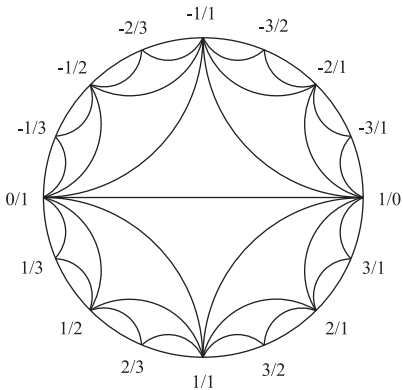}
\caption{The ideal Farey triangulation of the hyperbolic plane}
\label{triangulation}
\end{figure}

 Fix some coordinate system $(\mu, \lambda)$ on a torus $T$. We now
construct a map $\Psi_{\mu, \lambda}$ from $\Theta(T)$ to the set of
triangles of $\mathbb{F}$. To do that we consider the map
$\psi_{\mu, \lambda}$ that assigns to each nontrivial simple closed
curve $\mu^{\alpha}\lambda^{\beta}$ on $T$ the point $\alpha/
\beta\in \partial\mathbb{H}^2$. Note that each theta-curve $\theta$
on $T$ contains three nontrivial simple closed curves $\ell_1$,
$\ell_2$, $\ell_3$, that are formed by the pairs of edges of
$\theta$. Since the intersection index of every two curves $\ell_i$,
$\ell_j$, $i\neq j$, is equal to $\pm 1$, the points $\psi_{\mu,
\lambda}(\ell_1)$, $\psi_{\mu, \lambda}(\ell_2)$, $\psi_{\mu,
\lambda}(\ell_3)$ are the vertices of a triangle $\triangle$ of the
Farey triangulation, and we define $\Psi_{\mu, \lambda}(\theta)$ to
be $\triangle$.

 Denote by $\Sigma$ the graph dual to the triangulation
$\mathbb{F}$. This graph is a tree because the triangulation is
ideal. We now define the distance between any two triangles of
$\mathbb{F}$ to be the number of edges of the only simple path in
$\Sigma$ that joins the corresponding vertices of the dual graph.
The key observation used for the practical calculations is that for
any coordinate system $(\mu, \lambda)$ on $T$ the distance between
any two theta-curves $\theta$, $\theta'$ is equal to the distance
between the triangles $\Psi_{\mu, \lambda}(\theta)$, $\Psi_{\mu,
\lambda}(\theta')$ of the Farey triangulation. The reason is that if
$\theta'$ is obtained from $\theta$ via a flip, the corresponding
triangles have a common edge.
%--------------------------------------------------------------------------------------------------

\subsection{Simple and special spines}

 A compact polyhedron $P$, following Matveev \cite{Matveev-2003}, is
called simple if the link of each point $x\in P$ is homeomorphic to
one of the following $1$-dimensional polyhedra:
\begin{itemize}
    \item[(a)] a circle (the point $x$ is then called nonsingular);
    \item[(b)] a circle with a diameter (then $x$ is a triple point);
    \item[(c)] a circle with three radii (then $x$ is a true vertex).
\end{itemize}

 The components of the set of nonsingular points are said to
be the $2$-components of $P$, while the components of the set of
triple points are said to be the triple lines of $P$. A simple
polyhedron is special if each of its triple lines is an open
$1$-cell and each of its $2$-components is an open $2$-cell.

 A subpolyhedron $P$ of a $3$-manifold $M$ is a spine of $M$ if
$\partial M\neq\emptyset$ and the manifold $M\setminus P$ is
homeomorphic to $\partial M \times (0,1]$, or $\partial M=\emptyset$
and $M\setminus P$ is an open ball. A spine of a $3$-manifold is
called simple or special if it is a simple or special polyhedron,
respectively.

%--------------------------------------------------------------------------------------------------

\subsection{Relative spines}

 A manifold with boundary pattern, following Johannson
\cite{Johannson-1979}, is a $3$-manifold $M$ with a fixed graph
$\Gamma \subset \partial M$ that does not have any isolated
vertices. A manifold $M$ with boundary pattern $\Gamma$ can be
conveniently viewed as a pair $(M, \Gamma)$. The case $\Gamma =
\emptyset$ is also allowed.

\begin{definition}
 Let $(M, \Gamma)$ be a $3$-manifold with boundary pattern. Then a
subpolyhedron $P\subset M$ is called a relative spine of $(M,
\Gamma)$ if the following holds:
 \begin{enumerate}
 \item $M\setminus P$ is an open ball;
 \item $\partial M \subset P$;
 \item $\partial M \cap Cl(P\setminus\partial M) = \Gamma$.
 \end{enumerate}
\end{definition}

A relative spine is simple if it is a simple polyhedron.
Obviously, if $M$ is closed, then any relative spine of $(M,
\emptyset)$ is a spine of $M$.

\begin{figure}
\centering
\includegraphics[scale=0.6]{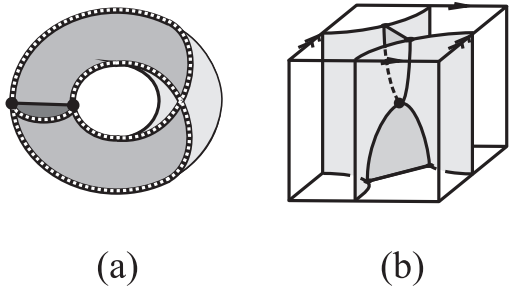}
\caption{Examples of simple relative spines} \label{blocks}
\end{figure}

\begin{example}
 Let $V$ be a solid torus with a meridian $m$. Choose a
simple closed curve $\ell$ on $\partial V$ that intersects $m$ twice
in the same direction. Note that $\ell$ decomposes $m$ into two
arcs. Consider a theta-curve $\theta_V\subset \partial V$ consisting
of $\ell$ and an arc (denote it by $\gamma$) of $m$. Then the
manifold $(V, \theta_V)$ has a simple relative spine without
interior true vertices. This spine is the union of $\partial V$, a
M\"{o}bius strip inside V, and a part of meridional disc bounded by
$\gamma$ (Figure~\ref{blocks}a).

 Note that among the three nontrivial simple closed curves contained in
$\theta_V$, none is isotopic to the meridian $m$ of $V$. On the
other hand, applying the flip to $\theta_V$ along $\gamma$, we get a
theta-curve $\theta_m\subset \partial V$ containing $m$.
\end{example}

\begin{example}
 Let $\theta$, $\theta'$ be two theta-curves on a torus $T$ such that
$\theta'$ is obtained from $\theta$ by exactly one flip. Then the
manifold
 $$(T\times [0, 1], (\theta\times \{0\})\cup (\theta'\times \{1\}))$$
has a simple relative spine $R$ with one interior true vertex (in
Figure~\ref{blocks}b the torus $T$ is represented as a square with
the sides identified). Note that $R$ satisfies the following
conditions:
 \begin{itemize}
 \item[(1)] for each $t\in [0, 1/2)$ a theta-curve $\theta_t$, where
 $R\cap (T\times \{t\}) = \theta_t\times \{t\}$, is isotopic to
 $\theta$;
 \item[(2)] for each $t\in (1/2, 1]$ the theta-curve $\theta_t$ is isotopic to
 $\theta'$;
 \item[(3)] $R\cap (T\times \{1/2\})$ is a wedge of two circles.
 \end{itemize}
\end{example}
%--------------------------------------------------------------------------------------------------

\subsection{Assembling of manifolds with boundary patterns}

 Denote by $\mathscr{T}$ the class of all manifolds $(M,
\Gamma)$ such that any component $T$ of $\partial M$ is a torus and
$T \cap \Gamma$ is a theta-curve. Let $(M, \Gamma)$ and $(M',
\Gamma')$ be two manifolds in $\mathscr{T}$ with nonempty
boundaries. Choose two tori $T\subseteq \partial M$, $T'\subseteq
\partial M'$ and a homeomorphism $\varphi: T\to T'$ taking the theta-curve
$\theta = T\cap \Gamma$ to the theta-curve $\theta' = T'\cap
\Gamma'$. Then we can construct a new manifold $(W, \Xi)\in
\mathscr{T}$, where $W = M\cup_\varphi M'$, and $\Xi =
(\Gamma\setminus \theta)\cup (\Gamma'\setminus \theta')$. In this
case we say that the manifold $(W, \Xi)$ is obtained assembling $(M,
\Gamma)$ and $(M', \Gamma')$ \cite{Martelli-Petronio-2001}.

 Note that if manifolds $(M, \Gamma)$ and $(M', \Gamma')$ have
simple relative spines denoted $P$ and $P'$ respectively, with $v$
and $v'$ interior true vertices, then the manifold $(W, \Xi)$ has a
simple relative spine $R$ with $v + v'$ interior true vertices.
Indeed, $R$ can be obtained by gluing $P$ and $P'$ along $\varphi$
and removing the open disc in $P\cup_\varphi P'$ that is obtained by
identifying $T\setminus \theta$ with $T'\setminus \theta'$.

 To prove the main theorem of the paper we generalize the notion of the assembling
by removing the restriction $\varphi(\theta) = \theta'$.

\begin{lemma}
 \label{assembling}
 Let $(M, \Gamma)$ and $(M', \Gamma')$ be two manifolds in
$\mathscr{T}$ with nonempty boundaries that admit simple relative
spines with $v$ and $v'$ interior true vertices respectively. Then
for any homeomorphism $\varphi: T\to T'$ of a torus $T\subseteq
\partial M$ onto a torus $T'\subseteq \partial M'$ there exists a simple
relative spine of a manifold $(W, \Upsilon)$, where $W =
M\cup_\varphi M'$ and $\Upsilon = (\Gamma\setminus \theta)\cup
(\Gamma'\setminus \theta')$, with $v + v' + d(\varphi(\theta),
\theta')$ interior true vertices.
\end{lemma}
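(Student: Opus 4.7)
The plan is to realize the general assembling by inserting a collar $T'\times[0,1]$ between $M$ and $M'$ whose internal simple spine absorbs the ``flip discrepancy'' between $\varphi(\theta)$ and $\theta'$. Write $d = d(\varphi(\theta),\theta')$.

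If $d=0$, the Farey triangles of $\varphi(\theta)$ and $\theta'$ coincide, so $\varphi(\theta)$ is isotopic to $\theta'$ on $T'$. After adjusting $\varphi$ by an isotopy we may assume $\varphi(\theta)=\theta'$, and the classical theta-matching assembling recalled just before the lemma directly produces the required spine with $v+v'$ interior true vertices.

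For $d\geq 1$, I would pick a geodesic of length $d$ in the tree $\Sigma$ dual to the Farey triangulation, yielding theta-curves $\theta_0=\varphi(\theta),\theta_1,\ldots,\theta_d=\theta'$ on $T'$ such that each consecutive pair differs by a single flip. Example~2 then gives, for each $i$, a simple relative spine with one interior true vertex of $(T'\times[i/d,(i+1)/d], (\theta_i\times\{i/d\})\cup(\theta_{i+1}\times\{(i+1)/d\}))$. Because consecutive cylinders share the matching theta-curve $\theta_i$ on the common torus (properties~(1)--(2) of Example~2), the classical theta-matching assembling chains them into a simple relative spine of $(T'\times[0,1], (\theta_0\times\{0\})\cup(\theta_d\times\{1\}))$ with $d$ interior true vertices.

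It then remains to insert this collar between $M$ and $M'$: identify $T\subset\partial M$ with $T'\times\{0\}$ via $\varphi$, and $T'\subset\partial M'$ with $T'\times\{1\}$ via the identity. Collar insertion preserves the homeomorphism type, so the result is $W=M\cup_\varphi M'$ with the boundary pattern $\Upsilon$. At both ends the theta-curves now coincide ($\varphi(\theta)=\theta_0$ below, $\theta_d=\theta'$ above), so two further applications of the classical theta-matching assembling attach $M$ and $M'$ without adding vertices, giving a total of $v+v'+d$ interior true vertices. The step I expect to require the most care is checking that the chained Example~2 spines, together with the attached spines of $M$ and $M'$, glue into a genuinely \emph{simple} relative spine---no new vertices appear, the link condition is preserved at every identified point, and conditions (1)--(3) of the relative-spine definition survive---but this is guaranteed by properties (1)--(2) of Example~2 combined with the description of the classical assembling that precedes the lemma.
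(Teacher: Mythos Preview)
Your proposal is correct and follows essentially the same route as the paper: build a simple relative spine of the collar $(T'\times[0,1],(\varphi(\theta)\times\{0\})\cup(\theta'\times\{1\}))$ with $d$ interior true vertices by stacking the Example~2 blocks along a flip sequence, then assemble with $(M,\Gamma)$ and $(M',\Gamma')$ at the ends. The only cosmetic differences are that the paper phrases the collar construction as an induction on $d$ (splitting off one flip at a time) rather than chaining all $d$ blocks at once, and in the base case $d=0$ the paper keeps the collar with the product spine $(\varphi(\theta)\times[0,1])\cup\partial M''$ instead of adjusting $\varphi$ by an isotopy.
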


\begin{proof}
 First, by induction on the number $n = d(\varphi(\theta), \theta')$
we prove that there exists a simple relative spine of the manifold
 $$(M'', \Gamma'') = (T'\times [0, 1], (\varphi(\theta)\times \{0\})\cup
 (\theta'\times \{1\}))$$
with $n$ interior true vertices. If $n=0$, i.e. the theta-curve
$\varphi(\theta)$ is isotopic to the theta-curve $\theta'$, the
desired spine is isotopic to the polyhedron
$(\varphi(\theta)\times [0, 1])\cup \partial M''$. Suppose that $n
> 0$. As has already been alluded to in the beginning of the
section 1.1, there exists a sequence $\{\theta_i\}_{i=0}^n$ of
pairwise distinct theta-curves on the torus $T'$ such that
$\theta_0 = \varphi(\theta)$, $\theta_n = \theta'$, and $\theta_i$
is obtained from $\theta_{i-1}$ by a flip, for $i=1\ldots n$. The
induction assumption implies that the manifold
 \begin{equation}
 \label{m1}
 (T'\times [0, 1/2], (\theta_0\times \{0\})\cup
 (\theta_{n-1}\times \{1/2\}))
 \end{equation}
has a simple relative spine with $n-1$ interior true vertices.
Furthermore, the simple relative spine of the manifold
 \begin{equation}
 \label{m2}
 (T'\times [1/2, 1], (\theta_{n-1}\times \{1/2\})\cup
 (\theta_n\times \{1\}))
 \end{equation}
with one interior true vertex is described in the Example 2. Then
the desired spine of the manifold $(M'', \Gamma'')$ is obtained by
assembling the manifolds (\ref{m1}) and (\ref{m2}) along the
identity map on $T'\times \{1/2\}$.

 Now, note that the consecutive assemblings of the manifolds $(M,
\Gamma)$, $(M'', \Gamma'')$ and $(M', \Gamma')$ along natural
homeomorphisms that take each point $x\in T$ to the point
$(\varphi(x), 0)\in T'\times \{0\}$, and each point $(y, 1)\in
T'\times \{1\}$ to the point $y\in T'$, yield the manifold $(W,
\Upsilon)$ and its simple relative spine with $v + v' +
d(\varphi(\theta), \theta')$ interior true vertices.
\end{proof}

%--------------------------------------------------------------------------------------------------

\section{Relative spines of the figure eight knot complement}

 In this section we construct some simple relative spines of
the figure eight knot complement $E(4_1)$. Let us fix a canonical
coordinate system on the boundary torus $\partial E(4_1)$ consisting
of oriented closed curves $\mu$, $\lambda$ such that the meridian
$\mu$ generates $H_1(E(4_1); \mathbb{Z})$ and the longitude
$\lambda$ bounds a surface in $E(4_1)$. This system determines the
map $\Psi_{\mu, \lambda}$ from $\Theta(T)$ to the set of triangles
of the Farey triangulation. Denote by $\triangle^{(i)}$ the triangle
of $\mathbb{F}$ with the vertices at $i$, $i+1$, and $\infty$.

\begin{proposition}
 \label{spine}
 For any $i\in\{ 0, 1, 2, 3\}$ there exists a theta-curve
$\theta^{(i)}$ on the torus $\partial E(4_1)$ such that the
manifold $(E(4_1), \theta^{(i)})$ has a simple relative spine with
$10$ interior true vertices and $\Psi_{\mu, \lambda}(\theta^{(i)})
= \triangle^{(i)}$.
\end{proposition}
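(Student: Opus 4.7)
The plan is to exhibit, for each $i\in\{0,1,2,3\}$, an explicit simple relative spine $P^{(i)}$ of $(E(4_1),\theta^{(i)})$ with exactly $10$ interior true vertices. A natural starting point is a combinatorial presentation of $E(4_1)$ from which the spine can be read off directly, for instance the standard two-tetrahedron ideal triangulation due to Thurston, or the polyhedral decomposition coming from viewing $E(4_1)$ as a partial filling of the chain link complement $6^3_1(1,2,\,\cdot\,)$ mentioned in the introduction.

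From such a starting decomposition I would produce a candidate polyhedron $P\supset\partial E(4_1)$ whose complement in $E(4_1)$ is an open ball and whose intersection $P\cap\partial E(4_1)$ is a graph $\Gamma$ on the torus. In general $\Gamma$ is more complicated than a theta-curve -- the dual spine of the Thurston triangulation, for example, meets the cusp cross-section in a triangulation of the torus with four vertices and twelve edges. A controlled sequence of local moves then simplifies $\Gamma$ to a theta-curve, keeping track of the true vertices created and destroyed in the interior. I would arrange these moves so that the final intersection is exactly $\theta^{(i)}$ realizing the Farey triangle $\triangle^{(i)}$; the latter is verified by identifying the three nontrivial simple closed curves of $\theta^{(i)}$ and computing their homology classes in the basis $(\mu,\lambda)$, which must come out to $i/1$, $(i+1)/1$ and $1/0$.

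To handle the four values of $i$ uniformly, I would exploit the fact that $\triangle^{(0)},\ldots,\triangle^{(3)}$ share the vertex $\infty=1/0$ corresponding to the meridian $\mu$. Each theta-curve $\theta^{(i)}$ can therefore be arranged to contain a common arc parallel to $\mu$, with the remaining two arcs realizing the slopes $i/1$ and $(i+1)/1$. The four spines $P^{(i)}$ can then be produced from a single ``template'' by a local modification in a small collar of $\partial E(4_1)$, and the invariance of the count $10$ across the four values of $i$ becomes a bookkeeping check on four explicit pictures. Verifying properties (1)--(3) of the definition of relative spine, together with simplicity (i.e., that the link of every point is a circle, a circle with a diameter, or a circle with three radii), amounts to inspecting each of the four polyhedra case by case.

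The main obstacle is controlling the interaction between the interior structure and the boundary pattern. Every simplification of $\Gamma$ tends to introduce a new triple line or true vertex, or forces a compensating modification of $P$ elsewhere; the challenge is to arrange the construction so that all of these contributions balance out to the sharp count of $10$ for each of the four boundary patterns, without losing the simple-polyhedron property along the way. I expect that a direct combinatorial description of the $P^{(i)}$ (rather than a reduction argument from a larger polyhedron) will be the cleanest route, reducing the proof to a finite verification on four explicit diagrams.
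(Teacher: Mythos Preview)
Your proposal is a plan rather than a proof: you correctly identify that the difficulty lies in controlling the interior vertex count while forcing the boundary pattern to be a prescribed theta-curve, but you do not resolve it, and the reduction-from-a-larger-polyhedron approach you sketch first is, as you yourself suspect, hard to make sharp.

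The key idea you are missing is to reverse the direction of the construction. The paper does not start with a relative spine carrying a complicated boundary graph and simplify; it starts with the minimal \emph{special} spine $P$ of $E(4_1)$ (two true vertices, disjoint from $\partial E(4_1)$) and builds up. Since $E(4_1)$ is the mapping cylinder of a local embedding $f:\partial E(4_1)\to P$, for any theta-curve $\theta$ on the boundary the polyhedron
\[
R(P,\theta)\;=\;\partial E(4_1)\ \cup\ (\text{mapping cylinder of }f|_{\theta})
\]
is automatically a relative spine of $(E(4_1),\theta)$. When $\theta$ is in general position with respect to $f$, $R(P,\theta)$ is simple and its interior true vertices are exactly: the two true vertices of $P$; the $f$-images of the two vertices of $\theta$; the transverse intersections of $f(\theta)$ with the triple lines of $P$; and the self-intersection points of $f(\theta)$. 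The problem thus reduces to drawing, for each $i\in\{0,1,2,3\}$, a theta-curve $\theta^{(i)}$ in the Farey triangle $\triangle^{(i)}$ whose image under $f$ has crossings plus self-intersections summing to $6$. The paper carries this out by describing the cell decomposition of $\partial E(4_1)$ induced by $f$, lifting to the universal cover, and exhibiting the four theta-curves explicitly; your ``common meridional arc'' template is not needed, the four curves are simply drawn and checked one by one.
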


\begin{proof}
 Step 1. Let $P$ be a special spine of an arbitrary compact orientable
$3$-manifold $M$ whose boundary is a torus, and let $\theta$ be a
theta-curve on $\partial M$. We begin the proof by describing a
method for constructing a simple relative spine $R(P, \theta)$ of
the manifold $M$.

 By Theorem 1.1.7 \cite{Matveev-2003}, $M$ can be identified with
the mapping cylinder of a local embedding $f:\partial M\to P$.
Denote by $f_{|\theta}:\theta\to P$ the restriction to $\theta$ of
the map $f$. Then the union $R(P, \theta)$ of the mapping cylinder
of $f_{|\theta}$ and of $\partial M$ is a relative spine of $M$,
since $\partial M \subset R(P, \theta)$, $\partial M \cap Cl(R(P,
\theta)\setminus\partial M) = \theta$, and $M\setminus R(P, \theta)$
is homeomorphic to the direct product of the open disc $\partial
M\setminus \theta$ with an interval. In general, $R(P, \theta)$ just
constructed is not necessarily a simple polyhedron. This can be
dealt with by introducing the notion of general position. We say
that a theta-curve $\theta\subset \partial M$ is in general position
with respect to the map $f$, if the image $f(\theta)$ satisfies the
following conditions.
\begin{enumerate}
    \item $f(\theta)$ contains no true vertices of $P$.
    \item For any intersection point $x$ of $f(\theta)$ with
    the triple lines of $P$ there exists a neighborhood $U(x)\subset P$
    such that the intersection $U(x)\cap f(\theta)$ is an arc
    meeting the set of the triple lines of $P$ transversally exactly at $x$.
    \item  For any intersection point $x$ of the set $f(\theta)$ with
    the $2$-components of $P$ its inverse image $f^{-1}_{|\theta}(x)$ consists of at most
    two points of $\theta$. Moreover, if $f^{-1}_{|\theta}(x)$ consists of exactly
    two points, then there exists a neighborhood $U(x)\subset P$ such that
    the inverse image $f^{-1}_{|\theta}(U(x)\cap f(\theta))$ of the
    intersection $U(x)\cap f(\theta)$ is the disjoint union of two
    arcs $\gamma_1$, $\gamma_2$ of $\theta$, and the images $f(\gamma_1)$, $f(\gamma_2)$
    intersect each other transversally at exactly one point $x$.
    Such a point $x$ is called the self-intersection point of the image $f(\theta)$
    of $\theta$.
\end{enumerate}

Obviously, if a theta-curve $\theta$ is in general position with
respect to the map $f$, then the relative spine $R(P, \theta)$ of
the manifold $M$ is simple.

\begin{figure}
\centering
\includegraphics[scale=0.6]{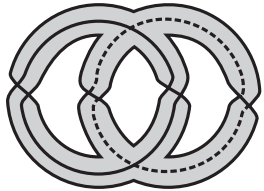}
\caption{A minimal spine of the complement of the figure eight knot}
\label{minspine}
\end{figure}

 Step 2. We consider now the minimal special spine $P$ of the manifold
$M = E(4_1)$ shown in Figure \ref{minspine} (see
\cite[2.4.2]{Matveev-2003}). To construct the theta-curves
$\theta^{(i)}$, $i\in\{ 0, 1, 2, 3\}$, we need to describe certain
cell decompositions of the torus $T=\partial M$ and of its universal
covering $\tilde{T}$. The local embedding $f:T\to P$ determines a
cell decomposition of $T$ as follows.
\begin{enumerate}
    \item The inverse image $f^{-1}(C)$ of every open $k$-dimensional
    cell $C$ of $P$ consists of two open $2$-cells if $k=2$,
    three open arcs if $k=1$, and four points if $k=0$.
    \item The restriction of $f$ to each of these cells
    is a homeomorphism onto the corresponding cell of $P$.
\end{enumerate}

\begin{figure}
\centering
\includegraphics[scale=0.7]{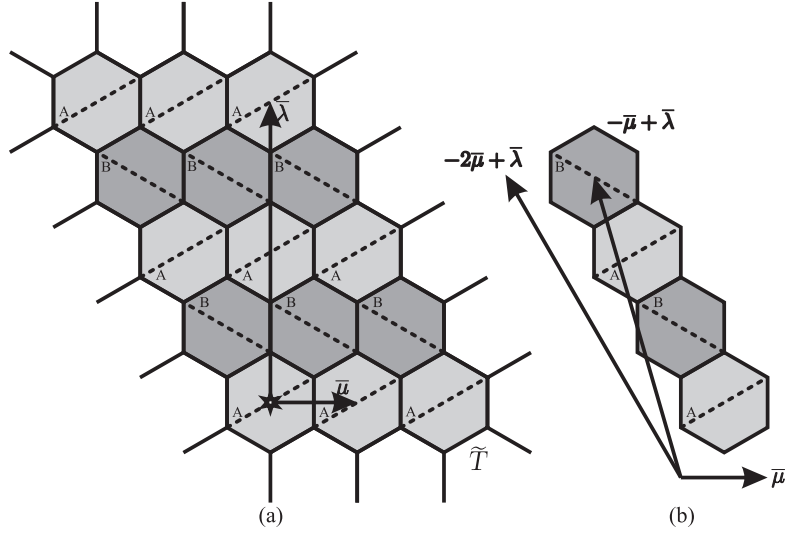}
\caption{Cell decompositions of $\tilde{T}$ (left) and $T$ (right)}
\label{decomposition}
\end{figure}

 Construct the universal covering $\tilde{T}$ of $T$. It can
be presented as a plane decomposed into hexagons, see Fig.
\ref{decomposition}a. The group of covering translations is
isomorphic to the group $\pi_1(T) = H_1(T; \mathbb{Z})$. We choose a
basis $\tilde{\mu}$, $\tilde{\lambda}$ as shown in Fig.
\ref{decomposition}a. It is easy to see that the corresponding
elements of $\pi_1(T)$ (which can be also viewed as oriented loops)
form the canonical coordinate system $(\mu, \lambda)$ on $T$. If we
factor this covering by the translations $\tilde{\mu}$,
$\tilde{\lambda}$, we recover $T$. If we additionally identify the
hexagons marked by the letter $A$ with respect to the composition of
the symmetry in the dotted diagonal of the hexagon and the
translation by $-\tilde{\mu} + \tilde{\lambda}/2$, and do the same
for the hexagons marked by the letter $B$, we obtain $P$. The torus
$T$ is shown in Fig. \ref{decomposition}b as a polygon $D$ composed
of four hexagons. Each side of $D$ is identified with some other one
via the translation along one of the three vectors $\tilde{\mu}$,
$-2\tilde{\mu}+\tilde{\lambda}$, and $-\tilde{\mu}+\tilde{\lambda}$.
The spine $P$ can be presented as the union of two hexagons, see
Fig. \ref{theta0} (right). The edges of the hexagons are decorated
with four different patterns. To recover $P$, one should identify
the edges having the same pattern.

\begin{figure}
\centering
\includegraphics[scale=0.4]{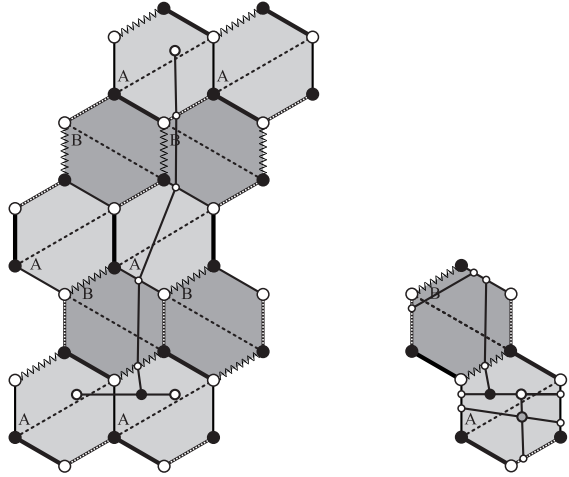}
\caption{The theta-curve $\theta^{(0)}$} \label{theta0}
\end{figure}

 Step 3. Now for each $i\in\{ 0, 1, 2, 3\}$ we exhibit a theta-curve
$\theta^{(i)}\subset \partial M$ such that the simple relative
spine $R(P, \theta^{(i)})$ of $M$ has $10$ interior true vertices
and $\Psi_{\mu, \lambda}(\theta^{(i)}) = \triangle^{(i)}$.

 Consider the wedge of the three arcs on $\tilde{T}$, see Fig.
\ref{theta0} (left). The projections of the arcs onto $T$ yield a
theta-curve that we denoted by $\theta^{(0)}$. It can be checked
directly that $\theta^{(0)}$ is in general position with respect to
the map $f$, and $\Psi_{\mu, \lambda}(\theta^{(0)}) =
\triangle^{(0)}$. It remains to note that the set of the interior
true vertices of $R(P, \theta^{(0)})$ consists of (a) the two true
vertices of the special polyhedron $P$, (b) the images under $f$ of
the two vertices of $\theta^{(0)}$, (c) the five intersection points
of the set $f(\theta^{(0)})$ with the triple lines of $P$, see Fig.
\ref{theta0} (left), and (d) one self-intersection point of the
image $f(\theta^{(0)})$ of $\theta^{(0)}$ (shown in Fig.
\ref{theta0} (right) as a fat gray dot).

\begin{figure}
\centering
\includegraphics[scale=0.4]{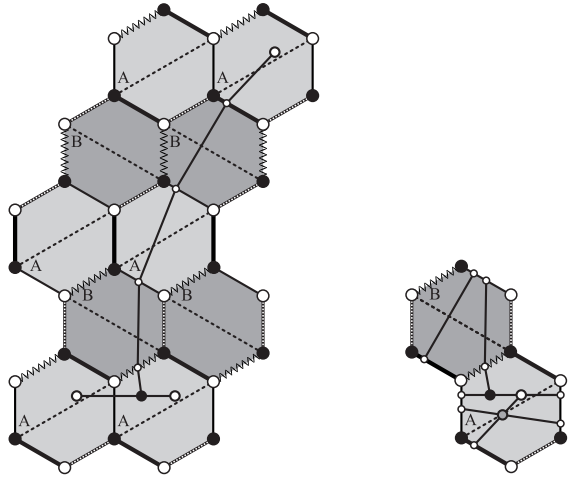}
\caption{The theta-curve $\theta^{(1)}$} \label{theta1}
\end{figure}

\begin{figure}
\centering
\includegraphics[scale=0.4]{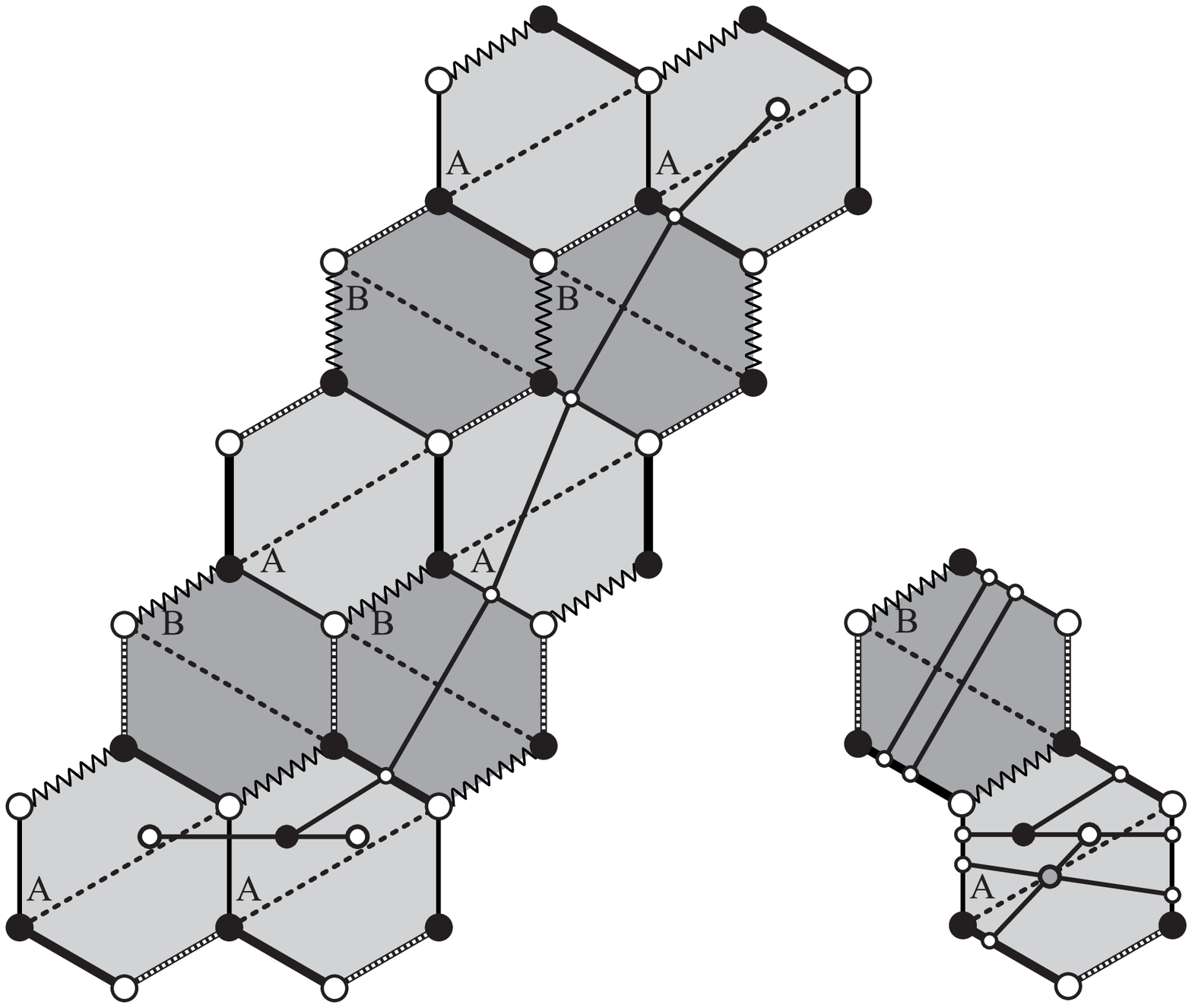}
\caption{The theta-curve $\theta^{(2)}$} \label{theta2}
\end{figure}

\begin{figure}
\centering
\includegraphics[scale=0.4]{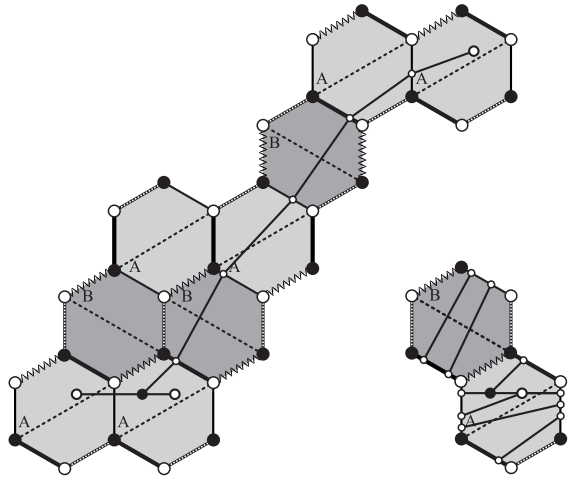}
\caption{The theta-curve $\theta^{(3)}$} \label{theta3}
\end{figure}

 The theta-curves $\theta^{(1)}$, $\theta^{(2)}$, $\theta^{(3)}$
satisfying the conclusion of the Proposition are shown in Fig.
\ref{theta1}, \ref{theta2}, \ref{theta3}. We point out that among
the $10$ interior true vertices of $R(P, \theta^{(3)})$ there are
$6$ intersection points of the set $f(\theta^{(3)})$ with the triple
lines of $P$, see Fig. \ref{theta3} (left), while there are no
self-intersection points of the image $f(\theta^{(3)})$ of
$\theta^{(3)}$, see Fig. \ref{theta3} (right).
\end{proof}

%--------------------------------------------------------------------------------------------------

\section{Proof of the main theorem}

 Let $p\geqslant 0$ and $q\geqslant 1$ be two relatively prime integers.
To prove the inequality $c(4_1(p/q))\leqslant \omega(p/q)$ it
suffices to construct a simple spine of the manifold $4_1(p/q)$ with
$\omega(p/q)$ true vertices.

 Thurston \cite{Thurston-1978} proved that the manifold $4_1(p/q)$ is
hyperbolic except for $p/q\in \{0, 1, 2, 3, 4, \infty\}$. The case
$p/q=\infty$ does not satisfy the assumptions of the Theorem. In
each of the five remaining cases the non-hyperbolic manifold
$4_1(p/q)$ has complexity $7$ and $\omega(p/q)=7$.

 Let us construct a simple spine of the hyperbolic manifold
$4_1(p/q)$. Recall that the meridian $m$ and the theta-curve
$\theta_m$ on the boundary of $(V, \theta_V)$ were fixed in Example
1. Let $(\mu, \lambda)$ be the canonical coordinate system on the
boundary torus $\partial E(4_1)$ of the figure eight knot complement
$E(4_1)$. Among all homeomorphisms $\partial V \to
\partial E(4_1)$ that take $m$ to the curve $\mu^p\lambda^q$, we
choose a homeomorphism $\varphi$ such that the distance between the
theta-curves $\varphi(\theta_m)$ and $\theta^{(0)}$ be as small as
possible. For convenience denote by $z$ the number $\min\{[p/q],
3\}$. By the Proposition, the manifold $(E(4_1), \theta^{(z)})$ has
a simple relative spine with $10$ interior true vertices. Since
$4_1(p/q) = V\cup_\varphi E(4_1)$, it follows from Lemma that the
manifold $(4_1(p/q), \emptyset)$ has a simple relative spine
$Q_{p/q}$ with $10 + d(\varphi(\theta_V), \theta^{(z)})$ interior
true vertices. Moreover, $Q_{p/q}$ is a spine of $4_1(p/q)$, since
$\partial 4_1(p/q) = \emptyset$.

 Now let us prove that $d(\varphi(\theta_V), \theta^{(z)}) = -2 +
\max\{[p/q]-3, 0\} + S(rem(p,q),q)$. Recall that for each $i\in\{ 0,
1, 2, 3\}$ the map $\Psi_{\mu, \lambda}$ takes $\theta^{(i)}$ to the
triangle $\triangle^{(i)}$ of the Farey triangulation with the
vertices at $i$, $i+1$, and $\infty$. Denote by $\triangle_V$ and
$\triangle_m$ the triangles $\Psi_{\mu, \lambda}(\varphi(\theta_V))$
and $\Psi_{\mu, \lambda}(\varphi(\theta_m))$, respectively. Since
the distance between theta-curves on $\partial E(4_1)$ is equal to
the distance between the corresponding triangles of $\mathbb{F}$, it
is sufficient to find $d(\triangle_V, \triangle^{(z)})$.

 The choice of $\varphi$ guarantees us that $\triangle_m$ is
the closest triangle to $\triangle^{(0)}$ among all the triangles
with a vertex at $p/q$. This implies (see \cite[Proposition
4.3]{Martelli-Petronio-2004} and \cite[Lemma 2]{Fominykh-2008}) that
$d(\triangle_m, \triangle^{(0)}) = S(p,q)-1$. Since the theta-curve
$\theta_V$ is obtained from $\theta_m$ by exactly one flip and
$\theta_V$ does not contain the meridian $m$, the triangle
$\triangle_V$ has a common edge with $\triangle_m$ and $p/q$ is not
a vertex of $\triangle_V$. Hence, $d(\triangle_V, \triangle^{(0)}) =
S(p,q)-2$. Analyzing the Farey triangulation, we can notice that
$d(\triangle_V, \triangle^{(z)}) = d(\triangle_V, \triangle^{(0)}) -
d(\triangle^{(z)}, \triangle^{(0)})$. Taking into account that
$d(\triangle^{(z)}, \triangle^{(0)})=z$, $S(p,q) = [p/q] +
S(rem(p,q),q)$ and $[p/q] - \min\{[p/q], 3\} = \max\{[p/q]-3, 0\}$
we get the equality $d(\varphi(\theta_V), \theta^{(z)}) =
d(\triangle_V, \triangle^{(z)}) = -2 + \max\{[p/q]-3, 0\} +
S(rem(p,q),q)$.

 Note that if $p/q\not\in \mathbb{Z}$, then $Q_{p/q}$ is the desired
spine, since it contains $\omega(p/q)$ true vertices. On the other
hand, if $p/q\in \mathbb{Z}$, the spine $Q_{p/q}$ contains
$\omega(p/q)+1$ true vertices. In this case $Q_{p/q}$ can be
transformed into another simple spine $Q_{p/q}'$ of $4_1(p/q)$ by a
sequence of moves along boundary curves of length $4$ (similar
arguments can be found in \cite[page 81]{Matveev-2003}). The spine
$Q_{p/q}'$ has the same number of true vertices but possesses a
boundary curve of length $3$, hence it can be simplified. The result
is a new spine of $4_1(p/q)$ with $\omega(p/q)$ true vertices.

 To conclude the proof of the theorem, it remains to note that the
table \cite{Atlas}contains $46$ hyperbolic manifolds of the type
$4_1(p/q)$ satisfying $\omega(p/q)\leqslant 12$. For each of them
our upper bound is sharp.

%--------------------------------------------------------------------------------------------------

\footnotesize

\end{document}